%Varinata corectat de Cipri pana la Produse de ideala 11.07.14
%Corectat+completat de Bodo pana la 21.10.2014; scos powers of ideals pt ca trebuie verificat corolarul.

\documentclass[11pt]{amsart}

\usepackage{amssymb, array, verbatim, amscd, color}
\usepackage{amsmath, amsfonts,amsthm}
\usepackage{enumerate}
\usepackage[all]{xy}
\usepackage{xy}

\theoremstyle{plain}

\newtheorem{theorem}{Theorem}[section]
\newtheorem{corollary}[theorem]{Corollary}
\newtheorem{lemma}[theorem]{Lemma}

\theoremstyle{definition}

\newtheorem{example}[theorem]{Example}

\newtheorem{claim}{\it Claim}

\newcommand{\bbQ}{\mathbb Q}

\newcommand{\bbP}{\mathbb P}

\newcommand{\N}{{\mathbb N}}

\newcommand{\Q}{{\mathbf Q}}
\newcommand{\Z}{{\mathbb Z}}

\newcommand{\Ker}{\mathrm{Ker}}

\newcommand{\End}{\operatorname{End}}
\newcommand{\Hom}{\operatorname{Hom}}

\newcommand{\type}{\operatorname{\mathbf{type}}}

\newcommand{\TF}{\mathcal{TF}}

\hyphenation{i-so-mor-phism}

\begin{document}

\title{Self-pure-generators over Dedekind domains} 
\author{Simion Breaz} %\thanks{S. Breaz is supported by the Babe\c s-Bolyai University grant GSCE-30254}
\address{Babe\c s-Bolyai University, Faculty of Mathematics
and Computer Science, Str. Mihail Kog\u alniceanu 1, 400084
Cluj-Napoca, Romania}
\email{bodo@math.ubbcluj.ro}

%\date{\today}

\subjclass[2000]{13C05, 20K15}

%\thanks{This research is supported by the grant UEFISCDI grant PN-II-ID-PCE-2012-4-0100.}

\begin{abstract} 
We prove that all pure submodules of a finite rank torsion-free module $A$ over a Dedekind domain are $A$-generated (i.e. $A$ is a self-pure-generator) if and only if $A$ has a rank $1$ direct summand $B$ such that $\mathbf{type}(B)$ is the inner type of $A$. This result is applied to describe the direct products of torsion-free groups of finite rank which are self-pure-generators. 
\end{abstract}

\keywords{Dedekind domain; finite rank torsion-free module; pure submodule; self-pure-generator; type}

\maketitle

%\tableofcontents

\section{Introduction}
Let $R$ be a unital ring. We will say that an $R$-module $A$ is a \textsl{self-pure-generator} if every pure submodule $K$ of $A$ is $A$-generated, i.e. there exists an epimorphism $A^{(I)}\to K$. The study of self-pure-generators was initiated in \cite{cal-et-al}, where the authors proposed the study of self-pure-generators abelian groups. A characterization for self-pure-generators which are direct products of rank $1$ torsion-free abelian groups is presented in \cite[Proposition 2]{cal-et-al}. 

The investigation of self-pure-generators is a natural extension of the study of self-generator modules (a module  $M$ is a self-generator if every submodule $K$ of $M$ is $M$-generated, \cite{Zimm}). Self-generator modules play an important role in the study of module categories since in many cases there are strong connections between the properties of a self-generator and the properties of its endomorphism ring (e.g. \cite[Lemma 2.4]{Fuller-density}, \cite[Lemma 1.4]{Zimm}). However, in many cases the self-generator condition is very restrictive: a torsion-free module over a Dedekind domain $R$ is a self-generator if and only if it has a direct summand isomorphic to $R$, \cite[Theorem 2.1]{Zimm}. Fortunatelly, in some cases it is enough to ask that some special submodules of a module $A$ are $A$-generated. For instance, an abelian group $A$ is an $\mathcal{S}$-group if every finite index subgroup of $A$ is $A$-generated, \cite{Ar-S-group}, and $A$ is flat as a left module over its endomorphism ring if and only if for every homomorphism $\alpha:A^n\to A$ the kernel of $\alpha$ is $A$-generated, \cite[Lemma 1.3]{Zimm}. Albrecht and Goeters describe some connections between these two classes of groups in \cite{Al-Go}, where they use a class of finite rank torsion-free groups $A$ such that all pure subgroups of rank $1$ are $A$-generated, \cite[Lemma 2.1]{Al-Go}. We refer to \cite{alb-cont} for other properties connected with $A$-generated subgroups of finite powers of $A$.   

In Theorem \ref{tffr-main}, we characterize the torsion-free modules of finite rank over Dedekind domains which are self-pure-generators as those finite rank torsion-free modules $A$ which have a direct decomposition $A=B\oplus C$ such that $B$ of rank $1$ and it generates all rank $1$ pure submodules of $C$. Using this result we obtain in Theorem \ref{direct-products} a similar charaterization for direct products of finite rank torsion-free abelian groups which are self-pure-generators. This completes \cite[Proposition 2]{cal-et-al}, where direct products of rank $1$ torsion-free abelian groups are considered. In the end we use the groups constructed in Example \ref{example} to observe that the above results are not valid for infinite direct sums of torsion-free abelian groups of finite rank, i.e. there exists a self-pure generator torsion-free abelian group $G$ which is a direct sum of an infinite family of rank $2$ abelian groups such that $G$ has no direct summands of rank $1$.  

We refer to \cite{lady} and \cite{lady-ja} for basic notions and techniques used in the study of finite rank torsion-free modules over Dedekind domains. Some of these notions and results are borrowed from the theory of finite rank torsion-free abelian groups, \cite{Arnold}. We also use \cite{Fuchs} for specific results about torsion-free abelian groups.

\section{Finite rank torsion-free modules}

Let $R$ be a Dedekind domain. We denote by $\Q$ its field of quotients. A torsion-free $R$-module $A$ is of rank $n\in\N$ if it can be embedded as an essential $R$-submodule of $\Q^n$. If $A$ is a finite rank torsion-free $R$-module, and $\End(A)$ is the endomorphism ring of $A$ then the ring $\Q\End(A)=\Q\otimes_R \End(A)$ is the ring of quasi-endomorphisms of $A$. This is an artinian $\Q$-algebra, its Jacobson radical $J$ is nilpotent and $J=\Q\otimes_R N$ (cf. \cite[Proposition 9.1]{Arnold}), where $N$ is the nil-radical of $\End(A)$. We recall that $\Q\End(A)$ determines the quasi-decompositions of $A$, i.e. the decompositions of $A$ in the additive category $\Q\TF$ whose objects are finite rank torsion-free $R$-modules, and $\Hom_{\Q\TF}(A,B)=\Q\otimes_R \Hom(A,B)$. The homomorphisms in $\Q\TF$ are called \textsl{quasi-homomorphisms}. Isomomorphisms in $\Q\TF$ are called \textsl{quasi-isomorphisms}. A quasi-homomorphism between $A$ and $B$ can be identified with a $\Q$-linear map $f$ between the injective envelopes $\Q A$ and $\Q B$ of $A$ and $B$ with the property that there exists an element  $r\in R^*$ such that $rf(A)\subseteq B$. In this context it is useful to remind that two $R$-modules $A$ and $C$ are \textsl{quasi-equal} if $\Q A=\Q C$ and there exist $r,s\in R^*$ such that $rC\leq A$ and $sA\leq C$. Therefore, a homomorphism $f:A\to B$ represents an isomorphism in $\Q\TF$ if and only if it is injective, and $f(A)$ is quasi-equal to $B$.        

%Then the nil-radical (i.e. the sum of all nilpotent ideals) of $\End(A)$ is denoted by $N$. Note that $N$ is also a nilpotent ideal, \cite[Proposition 9.1(c)]{Arnold}.

%It follows that $N$ is nilpotent, i.e. there exists a positive integer $k$ such that $N^k=0$. 

A decomposition of a finite rank torsion-free $R$-module $A$ as a direct sum $B\oplus C$ in the category $\Q\TF$  is called a \textsl{quasi-decomposition}. This means that the $R$-module $B\oplus C$ is isomorphic to a submodule of $\Q A$ which is quasi-equal with $A$. Since the multiplication by $r\in R^*$ is an isomorphism in $\Q\TF$, it follows that we can identify the quasi-decompositions of a finite rank torsion-free $R$-module $A$ with the direct decompositions of those submodules of $A$ which are quasi-equal to $A$. An $R$-module which is indecomposable in $\Q\TF$ is called \textsl{strongly indecomposable}. Since the quasi-endomorphism rings of finite rank torsion-free $R$-modules are finite dimensional $\Q$-algebras, an $R$-module $A$ is strongly indecomposable if and only if $\Q\End(A)$ is a local ring. Therefore, an endomorphism of a strongly indecomposable $R$-module is either nilpotent or a quasi-isomorphism. It follows that the category $\Q\TF$ is a Krull-Schmidt category, i.e. the quasi-decompositions into direct sums of strongly indecomposable $R$-modules are unique up to quasi-isomorphism.    

We recall that every torsion-free $R$-module of rank $1$ can be identified, up to an isomorphism, with a submodule of 
$\Q$. The \textsl{type} of a torsion-free $R$-module $B$ of rank $1$ is denoted by $\type(B)$ and it represents the quasi-isomorphism class of $B$. Two non-zero submodules $A$ and $B$ of $\Q$ have the same type if and only if there exists a fractional ideal $I$ such that $A=IB$ (hence $A$ is $B$-generated). The set of types is a lattice with respect the partial order relation defined by $\type(A)\leq \type(B)$ if and only if there exists a monomorphism $A\to B$. If $A$ and $B$ are non-zero submodules of $Q$ then $\inf\{\type(A),\type(B)\}=\type(A\cap B)$ and $\sup\{\type(A),\type(B)\}=\type(A+B)$. Let us point out that in the case of abelian groups two rank $1$ torsion-free abelian groups of the same type are isomorphic, and in this case the types can be described by using some numerical invariants. We refer to \cite{Arnold} and \cite{Fuchs} for the classification theory of torsion-free groups of rank $1$. 

Let $A$ be a torsion-free $R$-module of finite rank. If $X\subseteq A$, we will denote by $\langle X\rangle_\star$ the pure submodule of $A$ which is generated by $X$. If $a\in A$ is non-zero then the type of $a$ is defined as $\mathbf{type}( a)=\mathbf{type}(\langle a\rangle_\star)$. If $x_1,\dots,x_n$ is a maximal independent system of $A$ then \textsl{the inner type} of $A$ is computed as $$\mathbf{IT}(A)=\inf\{\type(\langle x_1\rangle_\star), \dots,\type(\langle x_n\rangle_\star)\}.$$ Let us recall that $\mathbf{IT}(A)$ is independent of the choice of the maximal independent system $x_1,\dots,x_n$, and for every $a\in A$ we have $\mathbf{IT}(A)\leq \mathbf{type}( a)$ (in fact  
$\mathbf{IT}(A)$ is the greatest lower bound of the types of all non-zero elements of $A$). 

We recall some basic properties of finite rank torsion-free modules over Dedekind domains. 
The next lemma is proved in \cite[Proposition 4.3]{lady}. For reader's convenience we present a short outline of the proof.

\begin{lemma}\label{lemma1} Let $B$ be a rank $1$ torsion-free $R$-module.
A torsion-free $R$-module finite rank $A$ is $B$-generated if and only if $\mathbf{type}(B)\leq \mathbf{IT}(A)$.
\end{lemma}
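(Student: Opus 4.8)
The plan is to reformulate $B$-generation in terms of the trace $\mathrm{tr}_B(A)=\sum_{f\in\Hom(B,A)}f(B)$, recalling that $A$ is $B$-generated precisely when $\mathrm{tr}_B(A)=A$, and then to prove the two implications separately, using the recalled facts that $\mathbf{IT}(A)\le\type(a)$ for every non-zero $a\in A$ and that $\mathbf{IT}(A)$ is the greatest lower bound of the types of the non-zero elements of $A$.

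For the direct implication, assume $A$ is $B$-generated and fix a non-zero $a\in A$. First I would write $a$ as a finite sum $a=\sum_{j=1}^{k}f_j(b_j)$ with $f_j\in\Hom(B,A)$ and $b_j\in B$; this produces a homomorphism $q\colon B^k\to A$ and an element $\beta=(b_1,\dots,b_k)$ with $q(\beta)=a$. Since $\mathbf{IT}(B^k)=\type(B)$ (the coordinate generators form a maximal independent system, and each pure closure is isomorphic to $B$), we get $\type(B)\le\type(\beta)$. The restriction of $q$ to the rank $1$ module $\langle\beta\rangle_\star$ is non-zero because $q(\beta)=a\ne 0$, and a non-zero homomorphism from a rank $1$ torsion-free module into a torsion-free module is injective (its kernel has rank $0$, hence is $0$), so $\type(q(\langle\beta\rangle_\star))=\type(\beta)$. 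As $q(\langle\beta\rangle_\star)$ is a rank $1$ submodule of $A$ containing $a$, it lies in $A\cap\Q a=\langle a\rangle_\star$, whence the inclusion gives $\type(q(\langle\beta\rangle_\star))\le\type(a)$. Chaining these yields $\type(B)\le\type(a)$ for every non-zero $a$, and therefore $\type(B)\le\mathbf{IT}(A)$.

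For the converse, assume $\type(B)\le\mathbf{IT}(A)$. Then for each non-zero $a\in A$ we have $\type(B)\le\mathbf{IT}(A)\le\type(a)=\type(\langle a\rangle_\star)$, so it suffices to prove the rank $1$ statement: if $C$ is a rank $1$ torsion-free module with $\type(B)\le\type(C)$, then $C$ is $B$-generated. Granting this, $a\in\langle a\rangle_\star=\mathrm{tr}_B(\langle a\rangle_\star)\subseteq\mathrm{tr}_B(A)$ for every $a$, so $\mathrm{tr}_B(A)=A$ and $A$ is $B$-generated. To establish the rank $1$ claim I would identify $B$ and $C$ with non-zero submodules of $\Q$ and compute the trace locally. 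Writing $I=\{q\in\Q:qB\subseteq C\}$, one has $\mathrm{tr}_B(C)=IB\subseteq C$, and since localization commutes with this colon-and-product construction over a Dedekind domain it is enough to check $(IB)_\mathfrak{p}=C_\mathfrak{p}$ at every maximal ideal $\mathfrak{p}$. Over the discrete valuation ring $R_\mathfrak{p}$ the modules $B_\mathfrak{p}$ and $C_\mathfrak{p}$ are either the field $\Q$ or principal fractional ideals, and the hypothesis $\type(B)\le\type(C)$ translates into the inequality of $\mathfrak{p}$-adic values that guarantees a local scalar carrying $B_\mathfrak{p}$ onto $C_\mathfrak{p}$ (or generating all of $\Q$); in each case $(IB)_\mathfrak{p}=C_\mathfrak{p}$.

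I expect this local computation, together with the verification that the local surjectivity glues to the global equality $\mathrm{tr}_B(C)=C$, to be the main obstacle, since it is exactly where the Dedekind hypothesis and the precise meaning of the type order enter; once the trace reformulation and the rank $1$ claim are in place, the remaining steps are formal.
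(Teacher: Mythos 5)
Your proposal is correct and takes essentially the same route as the paper: the forward direction rests on the fact that a non-zero homomorphism out of a rank $1$ torsion-free module has pure (hence zero) kernel, and the converse reduces to the rank $1$ pure submodules $\langle a\rangle_\star$ and then verifies $B$-generation by localizing at each maximal ideal and doing the valuation computation over the resulting discrete valuation ring. Your colon ideal $I=\{q\in \Q : qB\subseteq C\}$ is exactly the paper's $\Hom(B,C)$ under the identification of homomorphisms between submodules of $\Q$ with multiplications by elements of $\Q$, so your trace formulation $IB=C$ and the paper's evaluation map $\Hom(B,C)\otimes_R B\to C$ amount to the same local surjectivity check.
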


\begin{proof} The direct implication is obvious since for every non-zero homomorphism $f:B\to A$ we have $\Ker(f)\neq B$ and $\Ker(f)$ is a pure submodule of $B$. Then $f$ has to be a monomorphism.

Conversely, suppose that $\mathbf{type}(B)\leq \mathbf{IT}(A)$. It is enough to prove that for every non-zero element $a\in A$ the rank $1$ pure-submodule $C=\langle a\rangle_\star$ is $B$-generated. The module $C$ is $B$-generated if and only if the natural map $$\varphi:\Hom(B,C)\otimes_R B\to C,\  \varphi(f\otimes a)=f(a)$$ is an epimorphism. We note that $\Hom(B,C)\neq 0$ since $\mathbf{type}(B)\leq\mathbf{type}(C)$. Since $R$ is Dedekind, all prime ideals are maximal, hence $\varphi$ is an epimorphism if and only if for all prime ideals $\mathbf{p}$ the localisation $\varphi_\mathbf{p}$ is an epimorphism. 

Let $\mathbf{p}$ be a prime ideal. We can suppose that $B\leq C\leq \Q$, hence for every prime ideal $\mathbf{p}$ there exists $-\infty\leq \ell\leq k<\infty$ such that $B=\mathbf{p}^kR_\mathbf{p}$ and $C=\mathbf{p}^\ell R_\mathbf{p}$. It is easy to see that $\varphi_\mathbf{p}$ is an epimorphism.
\end{proof}

The following result is known as Baer's Lemma. For various proofs we refer to \cite[Theorem 4.5]{Al-hon} (the endomorphism ring of a rank 1 torsion-free $R$-module is Dedekind by \cite[Lemma 1.1]{clab}) or \cite[Theorem 4.16]{lady}.

\begin{lemma}\label{baer}
Suppose that $A$ is a torsion-free $R$-module of finite rank. Then every short exact sequence  $$0\to C\to A\to B\to 0$$ of $R$-modules such that 
\begin{enumerate}[{\rm (i)}]
 \item 
$B$ is torsion-free of rank $1$, and 
\item $A=C+\sum_{f\in \Hom(B,A)}f(B)$ \end{enumerate} splits.   
\end{lemma}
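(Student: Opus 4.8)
The plan is to prove Baer's Lemma by constructing an explicit splitting homomorphism $B \to A$.The plan is to reduce the splitting of the sequence to a divisibility statement about an ideal of $S=\End(B)$, and then to settle that statement by localizing at the maximal ideals of $S$. Write the sequence as $0\to C\to A\xrightarrow{\pi} B\to 0$. A splitting is the same thing as a homomorphism $s\colon B\to A$ with $\pi s=\id_B$, and such an $s$ exists if and only if $\id_B$ lies in the image $I$ of the map $\pi_*\colon\Hom(B,A)\to\Hom(B,B)=S$, $f\mapsto \pi f$. So the whole problem becomes: show $\id_B\in I$. First I would record that $I$ is an ideal of $S$. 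It is closed under addition because $\pi f_1+\pi f_2=\pi(f_1+f_2)$, and it is a right ideal because $(\pi f)g=\pi(fg)$ for $g\in S$; since $B$ embeds in $\Q$, the ring $S$ is a commutative subring of $\Q$ (in fact Dedekind, as recalled just above), so $I$ is a genuine ideal.

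Next I would translate hypothesis (ii). Applying $\pi$ to $A=C+\sum_{f}f(B)$ and using $\pi(C)=0$ gives $B=\pi(A)=\sum_f(\pi f)(B)=IB$, where $IB$ denotes the $S$-submodule of $\Q$ generated by the products $sb$ with $s\in I$ and $b\in B$. In particular $I\neq 0$, since $IB=B\neq 0$. Thus the lemma is equivalent to the purely ring-theoretic implication: if $I$ is a nonzero ideal of the Dedekind domain $S=\End(B)$ with $IB=B$, then $I=S$. As $S$ is Dedekind, it suffices to prove $I_\mathfrak{p}=S_\mathfrak{p}$ for every maximal ideal $\mathfrak{p}$ of $S$, which I would obtain by localizing the equality $IB=B$.

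The key point, and the step I expect to be the main obstacle, is that $B$ may fail to be finitely generated over $S$, so one cannot simply cancel $B$ by invertibility; I must first rule out the degenerate local behaviour $B_\mathfrak{p}=\Q$. Here the identity $S=\End(B)=(B:B)$ is decisive. If $B_\mathfrak{p}=\Q$ for some $\mathfrak{p}$, I would compare the localizations of $\mathfrak{p}^{-1}B$ and $B$: away from $\mathfrak{p}$ one has $(\mathfrak{p}^{-1})_\mathfrak{q}=S_\mathfrak{q}$, while at $\mathfrak{p}$ the module $B_\mathfrak{p}=\Q$ absorbs the fractional factor $\mathfrak{p}^{-1}_\mathfrak{p}=\pi^{-1}S_\mathfrak{p}$; hence $\mathfrak{p}^{-1}B$ and $B$ have equal localizations everywhere, and since a torsion-free module is the intersection of its localizations inside $\Q$, they coincide. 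Then any $q\in\mathfrak{p}^{-1}\setminus S$ satisfies $qB\subseteq B$, contradicting $(B:B)=S$. Consequently $B_\mathfrak{p}\neq\Q$, so over the discrete valuation ring $S_\mathfrak{p}$ the module $B_\mathfrak{p}$ is free of rank $1$. Localizing $IB=B$ now reads $I_\mathfrak{p}=S_\mathfrak{p}$, and as $\mathfrak{p}$ was arbitrary, $I=S$. In particular $\id_B\in I$, which produces the desired splitting $s\colon B\to A$. Throughout I would rely only on standard facts for rank $1$ modules over Dedekind domains: that $S$ is a commutative Dedekind domain with $\operatorname{Frac}(S)=\Q$, and that an $S$-submodule of $\Q$ is recovered as the intersection of its localizations.
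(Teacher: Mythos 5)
Your proof is correct, but it cannot be ``the same as the paper's'': the paper gives no proof of Lemma~\ref{baer} at all, deferring to \cite[Theorem 4.5]{Al-hon} and \cite[Theorem 4.16]{lady} and quoting Claborn's result that $\End(B)$ is Dedekind. Your argument is a clean, self-contained version of the standard trace-ideal proof, and every step checks out: the splitting is equivalent to $\id_B\in I:=\pi_*\Hom(B,A)$, which is an ideal of the commutative ring $S=\End(B)=(B:B)\subseteq \Q$; applying $\pi$ to hypothesis (ii) and using $\pi(C)=0$ gives exactly $IB=B$; and the cancellation $I=S$ is legitimately obtained by localizing at maximal ideals of $S$, since a nonzero $S_{\mathfrak p}$-submodule of $\Q$ is either $\Q$ or free of rank $1$. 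The step you rightly identify as the crux --- excluding $B_{\mathfrak p}=\Q$ --- is also the point where the cited proofs do real work (via projectivity/flatness of $B$ over its endomorphism ring, or height computations), and your route through $\mathfrak p^{-1}B=B$ (comparing localizations and using $M=\bigcap_{\mathfrak q}M_{\mathfrak q}$ inside $\Q$) together with $(B:B)=S$ is sound; note it is precisely here that $S=\End(B)$, rather than an arbitrary Dedekind ring acting on $B$, is indispensable (for $S=\Z$, $B=\Z[1/p]$, $I=p\Z$ one has $IB=B$ with $I\neq S$, but then $(B:B)\neq S$). Two small remarks: your phrasing ``for every maximal ideal $\mathfrak p$'' silently assumes $S$ is not a field; in the degenerate case $B\cong\Q$ one has $S=\Q$ and $I\neq 0$ immediately forces $I=S$, so the lemma still holds, but a sentence covering this would make the localization step airtight. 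Second, your argument nowhere uses the finite-rank hypothesis on $A$ (nor any assumption on $C$ beyond exactness), so you have in fact proved a more general statement than the lemma as stated --- a genuine bonus of this approach over invoking the finite-rank versions in the references.
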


As a corollary, we obtain the version of \cite[Theorem 2.3]{Arnold} for Dedekind domains.

\begin{corollary}\label{cor-baer}
Let $A\doteq B\oplus C$ be a quasi-decomposition of the finite rank torsion-free $R$-module $A$ such that 
$B$ is of rank $1$ and $\mathbf{type}(B)\leq \mathbf{IT}(C)$. Then $A$ has a direct summand $B'$ which is quasi-isomorphic to $B$.
\end{corollary}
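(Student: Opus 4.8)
The plan is to realize the quasi-decomposition concretely and then invoke Baer's Lemma. First I would use the stated description of quasi-decompositions: since $A\doteq B\oplus C$, I may identify $\Q A=\Q B\oplus\Q C$ and fix a submodule $A_0=B\oplus C$ of $\Q A$ that is quasi-equal to $A$. Let $\pi\colon\Q A\to\Q B$ be the projection along $\Q C$ and set $B'=\pi(A)$, a rank $1$ submodule of $\Q B$. Choosing $r,s\in R^*$ with $rA\subseteq A_0$ and $sA_0\subseteq A$, I would deduce $rB'=\pi(rA)\subseteq\pi(A_0)=B$ and $sB=\pi(sA_0)\subseteq\pi(A)=B'$, so that $B'$ is quasi-equal, hence quasi-isomorphic, to $B$; in particular $\type(B')=\type(B)$. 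Restricting $\pi$ to $A$ then produces a short exact sequence
\[
0\to A\cap\Q C\to A\xrightarrow{\ \pi\ }B'\to 0
\]
with $B'$ torsion-free of rank $1$.

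The second step is to verify the hypotheses of Lemma \ref{baer} for this sequence, the crucial one being condition (ii). Since the inner type equals the greatest lower bound of the types of all non-zero elements (as recalled above), it is a quasi-isomorphism invariant and satisfies $\mathbf{IT}(B\oplus C)=\inf\{\mathbf{IT}(B),\mathbf{IT}(C)\}$; a short height computation on $\Q B\oplus\Q C$ gives the non-trivial inequality. Hence $\mathbf{IT}(A)=\mathbf{IT}(B\oplus C)=\inf\{\type(B),\mathbf{IT}(C)\}$, which equals $\type(B)$ by the hypothesis $\type(B)\le\mathbf{IT}(C)$. Therefore $\type(B')=\type(B)=\mathbf{IT}(A)$, and Lemma \ref{lemma1} shows that $A$ is $B'$-generated, i.e. $A=\sum_{f\in\Hom(B',A)}f(B')$. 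This immediately yields condition (ii), namely $A=(A\cap\Q C)+\sum_{f\in\Hom(B',A)}f(B')$; condition (i) is clear.

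Finally, Baer's Lemma applies and the sequence splits. A section $\sigma\colon B'\to A$ of $\pi$ identifies $B'\cong\sigma(B')$ with a direct summand of $A$, and since $B'\doteq B$ this summand is quasi-isomorphic to $B$, which is the conclusion sought.

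The main obstacle is the second step: obtaining the equality $\type(B')=\mathbf{IT}(A)$ that makes Lemma \ref{lemma1} directly applicable. Everything hinges on the fact that passing to a quasi-decomposition does not alter the inner type and that the inner type of a direct sum is the infimum of the inner types of the summands; once $\mathbf{IT}(A)=\type(B)$ is in hand, the rest is a formal combination of the two preceding lemmas. One should also take minor care that it is the trace over $\Hom(B',A)$, and not over $\Hom(B,A)$, that must equal $A$; this is precisely why the genuine image $B'=\pi(A)\subseteq A$ is used in place of the abstract quasi-summand $B$.
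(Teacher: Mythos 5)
Your proof is correct and follows essentially the same route as the paper: the paper purifies $C$ to $C_\star$ (which is exactly your $A\cap\Q C$), notes that $A/C_\star$ (your $B'=\pi(A)$) is quasi-isomorphic to $B$, and applies Lemma \ref{baer} to split the resulting sequence. You merely make explicit what the paper leaves implicit, namely the verification of hypothesis (ii) of Baer's Lemma via the computation $\mathbf{IT}(A)=\inf\{\type(B),\mathbf{IT}(C)\}=\type(B)$ and Lemma \ref{lemma1}.
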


\begin{proof}
Let $C_\star$ be the pure submodule generated by $C$. Then $A/C_\star$ is quasi-isomorphic to $B$, and it follows that the exact sequence $$0\to C_\star\to A\to  A/C_\star\to 0$$  verifies the hypotheses of Lemma \ref{baer}. Then $C_\star$ is a direct summand of $A$ and a complement of $C_\star$ is quasi-isomorphic to $B$.
\end{proof}

\begin{theorem}\label{tffr-main}
The following are equivalent for a finite rank torsion-free $R$-module $A$ over a Dedekind domain $R$:
\begin{enumerate}[{\rm (a)}]
 \item $A$ is a self-pure-generator;
 \item all pure rank $1$ submodules of $A$ are $A$-generated; 
 \item $A$ has a quasi-direct summand $B$ of rank $1$ such that $\mathbf{type}(B)=\mathbf{IT}(A)$;% $A$ is generated by $B$;
 \item $A$ has a direct summand $B$ of rank $1$ such that $\mathbf{type}(B)=\mathbf{IT}(A)$. 
\end{enumerate}
\end{theorem}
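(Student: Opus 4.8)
The plan is to prove the cycle of implications (a) $\Rightarrow$ (b) $\Rightarrow$ (c) $\Rightarrow$ (d) $\Rightarrow$ (a). The implication (a) $\Rightarrow$ (b) is immediate, since (b) is just the restriction of the self-pure-generator property to pure submodules of rank $1$. For (c) $\Rightarrow$ (d) I would start from a quasi-decomposition $A\doteq B\oplus C$ with $B$ of rank $1$ and $\mathbf{type}(B)=\mathbf{IT}(A)$; since the inner type is a quasi-isomorphism invariant computed on a maximal independent system, $\mathbf{IT}(A)=\inf\{\mathbf{type}(B),\mathbf{IT}(C)\}$, so $\mathbf{type}(B)\le\mathbf{IT}(C)$ and Corollary \ref{cor-baer} yields an honest rank $1$ direct summand $B'$ quasi-isomorphic to $B$, whence $\mathbf{type}(B')=\mathbf{IT}(A)$. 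For (d) $\Rightarrow$ (a), write $A=B\oplus C$ with $\mathbf{type}(B)=\mathbf{IT}(A)$ and let $K$ be any pure submodule; purity gives $\mathbf{type}(x)\ge\mathbf{IT}(A)=\mathbf{type}(B)$ computed inside $K$ for every nonzero $x\in K$, so $\mathbf{type}(B)\le\mathbf{IT}(K)$ and Lemma \ref{lemma1} makes $K$ a $B$-generated module; composing an epimorphism $A\to B$ (available because $B$ is a summand) with the generating epimorphism shows $K$ is $A$-generated.

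The heart of the argument is (b) $\Rightarrow$ (c), where the plan is to produce a rank $1$ quasi-summand of minimal type by a double-generation argument. First I would fix a pure submodule $B=\langle a\rangle_\star$ of rank $1$ with $\mathbf{type}(B)=\mathbf{IT}(A)$. Because $\mathbf{type}(B)=\mathbf{IT}(A)$, Lemma \ref{lemma1} shows that $A$ is $B$-generated, so there is a family of homomorphisms $g_j\colon B\to A$ with $\sum_j g_j(B)=A$. On the other hand, $B$ is a pure rank $1$ submodule, so hypothesis (b) provides a family $p_i\colon A\to B$ with $\sum_i p_i(A)=B$. I would then consider the scalars $p_ig_j\in\Q\End(B)=\Q$: if every $p_ig_j$ were $0$, then each $p_i$ would vanish on $\sum_j g_j(B)=A$, forcing all $p_i=0$ and contradicting $\sum_i p_i(A)=B\neq0$. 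Hence some composite $p_{i_0}g_{j_0}$ is a nonzero element of $\Q\End(B)$, i.e. a quasi-automorphism of $B$. Then $e=g_{j_0}(p_{i_0}g_{j_0})^{-1}p_{i_0}$ is an idempotent of $\Q\End(A)$ whose image is quasi-isomorphic to $B$, so $B$ is a quasi-direct summand of $A$ of type $\mathbf{IT}(A)$, which is exactly (c).

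The step I expect to require the most care is the very first one in (b) $\Rightarrow$ (c): the existence of a pure rank $1$ submodule whose type equals $\mathbf{IT}(A)$, i.e. the fact that the inner type is actually attained by an element of $A$ rather than merely being the greatest lower bound of the types of all nonzero elements. For finite rank modules this is classical (one chooses coefficients so that a combination of a maximal independent system avoids height cancellation at all but finitely many primes), and I would either cite it or prove it by a localization argument in the spirit of the proof of Lemma \ref{lemma1}. Everything else---the identity $\Q\End(B)=\Q$ for $B$ of rank $1$, and the splitting of idempotents in the Krull--Schmidt category $\Q\TF$---is part of the standard machinery already recalled in this section, and the families $g_j$, $p_i$ need not be finite, since the contradiction only uses that their images sum to $A$ and to $B$ respectively.
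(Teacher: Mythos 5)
Your cycle (a)$\Rightarrow$(b)$\Rightarrow$(c)$\Rightarrow$(d)$\Rightarrow$(a) is sound in three of its four legs: (a)$\Rightarrow$(b) is trivial, (c)$\Rightarrow$(d) via Corollary \ref{cor-baer} and (d)$\Rightarrow$(a) via Lemma \ref{lemma1} match the paper, and your idempotent trick in (b)$\Rightarrow$(c) --- picking $g_j\colon B\to A$ with $\sum_j g_j(B)=A$, $p_i\colon A\to B$ with $\sum_i p_i(A)=B$, extracting a nonzero scalar $p_{i_0}g_{j_0}\in\Q\End(B)=\Q$ and forming the idempotent $e=g_{j_0}(p_{i_0}g_{j_0})^{-1}p_{i_0}\in\Q\End(A)$ --- is correct as far as it goes, and genuinely slicker than the paper's argument. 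But it rests entirely on your opening step, and that step is a genuine gap: it is \emph{false} that every finite rank torsion-free module contains an element $a$ with $\type(a)=\mathbf{IT}(A)$. The paper is careful to say only that $\mathbf{IT}(A)$ is the \emph{greatest lower bound} of the types of nonzero elements, not a minimum, and the claim you call classical fails already for rank $2$ groups over $\Z$: enumerate the rational directions $d_1,d_2,\dots$ in $\Q^2$, partition the primes into infinitely many infinite sets $S_1,S_2,\dots$, and let $A$ be generated by $\Z x+\Z y$ together with $p^{-1}d_k$ for all $p\in S_k$ and all $k$. Then $\mathbf{IT}(A)$ is the type of $\Z$, but every nonzero $a\in A$ lies in some direction $d_k$ and so has height $\geq 1$ at the infinitely many primes of $S_k$, giving $\type(a)>\mathbf{IT}(A)$ strictly. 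Your proposed fix (``choose coefficients avoiding height cancellation at all but finitely many primes'') cannot work, because a single coefficient choice must avoid cancellation at infinitely many primes simultaneously, and in the example each direction is bad at infinitely many primes.

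Moreover, deriving attainment \emph{from} hypothesis (b) is essentially the whole content of the implication: once (b)$\Rightarrow$(d) is known, attainment follows because the rank $1$ summand is a pure submodule of type $\mathbf{IT}(A)$, so assuming it up front is circular. This is exactly where the paper does its real work: it takes a quasi-decomposition $A_1\oplus\dots\oplus A_\ell$ into strongly indecomposables, builds from (b) an inductive chain of homomorphisms $f_k\colon B_{u_k}\to\langle a_{k-1}\rangle_\star$ whose composites are nonzero, and uses the nilpotence of the Jacobson radical of $\Q\End(A)$ plus the Krull--Schmidt property of $\Q\TF$ to force some $B_{u_k}$ to be a rank $1$ quasi-summand whose type lies below the type of every element. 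If you replace your first step by that radical-nilpotency argument (or some other derivation of a minimal-type pure rank $1$ submodule from (b)), your double-generation idempotent construction would then give a clean finish; as written, the proof does not go through.
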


\begin{proof} (a)$\Rightarrow$(b) is obvious.

(b)$\Rightarrow$(c) We consider a quasi-decomposition $A_1\oplus \dots\oplus  A_\ell$ for the module $A$ such that $A_1\oplus \dots\oplus  A_\ell\leq A$ and all submodules $A_i$ are strongly indecomposable. 

For all quasi-isomorphism classes associated to the $R$-modules $A_1, \dots,  A_\ell$ we fix representatives $B_u\in\{A_1,\dots,A_\ell\}$ with $u=1,\dots,n$ for some appropriate $n$.   

\begin{claim}\label{claim1}
For every non-zero element $x\in A$ there exists $u\in\{1,\dots,n\}$ such that $\Hom(B_u,\langle x\rangle_\star)\neq 0$. 
\end{claim}
In order to prove this, let $x\in A$ be a non-zero element of $A$. Since $A$ is a self-pure-generator, there exists a non-zero endomorphism $f:A\to A$ with  $f(A)\subseteq \langle x\rangle_\star$. Then there exists $i\in\{1,\dots,\ell\}$ such that $f(A_i)\neq 0$. Since $A_i$ is quasi-isomorphic to some $B_u$, it follows that $B_u$ is isomorphic to an essential submodule of $A_i$, hence $\Hom(B_u,\langle x\rangle_\star)\neq 0$, and the claim is proved.  

\begin{claim}\label{claim2} For every non-zero element $a\in A$ there exists a quasi-direct summand $B_u$ of rank $1$, and a non-zero homomorphism $f:B_u\to \langle a\rangle_\star$. 
\end{claim}

We start with a non-zero element $a=a_0\in A$. By Claim \ref{claim1} there exists $u_1\in \{1,\dots,n\}$ and a non-zero homomorphism 
$f_1:B_{u_1}\to \langle a\rangle_\star$. We fix an element $a_1\in B_{u_1}$ such that $f_1(a_1)\neq 0$. Therefore, we can construct inductively a sequence 
$$a_0,a_1,\dots,a_k,\dots$$ of elements in $A$ with the following properties: 
\begin{itemize}
\item[i)] for every $k\geq 1$ there exists $u_k\in \{1,\dots,n\}$ with $a_k\in B_{u_k}$, and 
\item [ii)] for every $k\geq 1$ there exists a homomorphism $f_k:B_{u_k}\to \langle a_{k-1}\rangle_\star$ such that $f_k(a_k)\neq 0$. 
\end{itemize}

We extend all homomorphisms $f_k$ to endomorphisms $$\overline{f_k}:A_1\oplus \dots\oplus  A_\ell\to A_1\oplus \dots\oplus  A_\ell$$ such that $\overline{f_k}(A_i)=0$ for all direct summands $A_i\neq B_{u_k}$. Note that all homomorphisms $\overline{f_k}$ can be viewed as quasi-endomorphisms of $A$, i.e. as restrictions of some endomorphisms of the $\Q$ vector space $\Q A$. In this context, it follows that for every $k\geq 1$ there exists a non-zero element $\alpha_k\in \Q$ such that $\overline{f_k}(a_k)=\alpha_ka_{k-1}$. It follows that for all $k\geq 0$ we have $\overline{f_1}\circ\overline{f_2}\circ\ldots \circ\overline{f_k}\neq 0$. 

Since the Jacobson radical of the quasi-endomorphism ring of $A$ is nilpotent, it follows that there exists $k\in \N$ such that $\overline{f_k}$ is not in the Jacobson radical of $\Q\End(A)$. Using the proof of \cite[Theorem 9.10]{Arnold} (see also the proof of \cite[Theorem 3.2]{Al-Go-pams}), we obtain that $B_{u_k}=B_{u_{k+1}}$, and $f_k\notin N(\End(B_{u_k}))$. Since $B_{u_{k}}$ is strongly indecomposable, it follows that $f_k$ is a quasi-isomorphism. But the image of $f_k$ is of rank 1, hence $B_{u_k}$ is of rank $1$, and Claim \ref{claim2} is proved.  

\medskip

In order to complete the proof, since $A$ has quasi-direct summands of rank $1$, we can suppose for the quasi-decomposition $ A_1\oplus \dots\oplus  A_\ell$ of $A$ that there exists $p\in\{1,\dots,\ell\}$ such that $A_i$ are of rank $1$ for all $i\in\{1,\dots,p\}$, and for all $i\notin\{1,\dots,p\}$ the $R$-modules $A_i$ are of rank at least $2$. Since the quasi-direct decomposition $A_1\oplus \dots\oplus  A_\ell$ of $A$ is unique up to quasi-isomorphism, and the types of rank $1$ torsion-free $R$-modules are invariant with respect to quasi-isomorphisms, it follows from Claim \ref{claim2} that for every $a\in A$ there exists $i\in\{1,\dots,p\}$ such that $\mathbf{type}(A_i)\leq \mathbf{type}(\langle a\rangle_\star)$. 

For every $i\in \{1,\dots,p\}$ we fix a non-zero element $a_i\in A_i$. Then there exists a quasi-direct summand $B$ of rank $1$ and a non-zero homomorphism $B\to \langle a_1+\dots+a_p\rangle_\star$. Since $\mathbf{type}(\langle a_1+\dots+a_p\rangle_\star)\leq \mathbf{type}(A_i)$ for all $i\in \{1,\dots,p\}$, it follows that there exists a quasi-direct summand $B$ of $A$ such that for all $a\in A$ we have $\mathbf{type}(B)\leq \mathbf{type}(\langle a\rangle_\star)$. It is easy to see that $\mathbf{type}(B)\leq \mathbf{IT}(A)$, hence the proof is complete.

\medskip

(c)$\Rightarrow$(d) Suppose that $B'\oplus C$ is a submodule of $A$ such that it is quasi-equal to $A$, $B'$ is of rank $1$, and $\mathbf{type}(B')\leq \mathbf{IT}(A)$. By Corollary \ref{cor-baer} it follows that $A$ has a direct summand $B$ of rank $1$ such that $$\mathbf{type}(B)=\mathbf{type}(B')\leq \mathbf{IT}(A).$$

\medskip

(d)$\Rightarrow$(a) From (d) it follows that for every pure submodule $C$ of $A$ we have $\mathbf{type}(B)\leq \mathbf{IT}(C)$. Now the conclusion follows from \cite[Proposition 3.7]{lady} and Lemma \ref{lemma1}.
\end{proof}

It was proved in \cite[Example 1.2]{Zimm} that there exist self-generator modules $M$ such that $M^2$ is not a self-generator. Moreover, over Dedekind domains all finite powers of a self-generator are also self-generators, \cite[Theorem 2.1]{Zimm}. A similar property is valid for finite rank torsion-free self-pure-generators over Dedekind domains.

\begin{corollary}
The following are equivalent for a finite rank torsion-free $R$-module $A$ over a Dedekind domain:
\begin{enumerate}[{\rm (a)}]
 \item $A$ is a self-pure-generator;
 \item if  $n$ is a positive integer then $A^n$ is a self-pure-generator;
\item there exists a positive integer $n$ such that $A^n$ is a self-pure-generator.
 \end{enumerate}
\end{corollary}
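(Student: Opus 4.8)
The plan is to prove the corollary by leveraging the characterization established in Theorem~\ref{tffr-main}, specifically the equivalence (a)$\Leftrightarrow$(d), which reduces the self-pure-generator property to the existence of a rank $1$ direct summand $B$ with $\mathbf{type}(B)=\mathbf{IT}(A)$. The logical structure (a)$\Rightarrow$(b)$\Rightarrow$(c)$\Rightarrow$(a) is immediate for the formal implications: (a)$\Rightarrow$(b) by taking $n=1$, and (b)$\Rightarrow$(c) trivially since (b) asserts the property for \emph{all} $n$ while (c) asks only for \emph{some} $n$. So the entire content lies in the single implication (c)$\Rightarrow$(a), and I would organize the proof around establishing that implication via the type-theoretic criterion.

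First I would observe that since $A$ and $A^n$ are both finite rank torsion-free $R$-modules, Theorem~\ref{tffr-main} applies to each of them. The key computation is to relate $\mathbf{IT}(A^n)$ to $\mathbf{IT}(A)$. If $x_1,\dots,x_r$ is a maximal independent system for $A$, then the $rn$ elements obtained by placing these in each of the $n$ coordinates form a maximal independent system for $A^n$, and each such element $x_j$ sitting in one coordinate generates a pure rank $1$ submodule isomorphic to $\langle x_j\rangle_\star$ in $A$. Hence $\mathbf{IT}(A^n)=\inf$ over these $rn$ types, which coincides with $\inf\{\mathbf{type}(\langle x_1\rangle_\star),\dots,\mathbf{type}(\langle x_r\rangle_\star)\}=\mathbf{IT}(A)$. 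Thus $\mathbf{IT}(A^n)=\mathbf{IT}(A)$ for every positive integer $n$.

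Next I would examine how rank $1$ direct summands behave. For the implication (c)$\Rightarrow$(a), suppose $A^n$ is a self-pure-generator for some $n\geq 1$. By Theorem~\ref{tffr-main}(d) applied to $A^n$, there is a rank $1$ direct summand $B$ of $A^n$ with $\mathbf{type}(B)=\mathbf{IT}(A^n)=\mathbf{IT}(A)$. The task is then to produce a rank $1$ direct summand of $A$ itself with type equal to $\mathbf{IT}(A)$. Here I would pass to the quasi-decomposition category $\Q\TF$: the quasi-direct summand structure of $A^n$ is governed by the Krull--Schmidt property, and since $B$ is a rank $1$ (strongly indecomposable) quasi-direct summand of $A^n=A\oplus\cdots\oplus A$, its quasi-isomorphism class must already appear as a quasi-direct summand of a single copy of $A$ by uniqueness of quasi-decompositions into strongly indecomposables. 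Thus $A$ has a quasi-direct summand $B'$ of rank $1$ with $\mathbf{type}(B')=\mathbf{type}(B)=\mathbf{IT}(A)$, which is precisely condition~(c) of Theorem~\ref{tffr-main} for $A$. Applying (c)$\Rightarrow$(d)$\Rightarrow$(a) of that theorem yields that $A$ is a self-pure-generator.

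The main obstacle I anticipate is the passage from a rank $1$ \emph{quasi}-direct summand of $A^n$ to one of $A$: one must argue carefully that the Krull--Schmidt uniqueness in $\Q\TF$ forces a strongly indecomposable quasi-summand of $A^n$ to occur in one of the $n$ identical factors $A$. This is where the invariance of types under quasi-isomorphism and the uniqueness of strongly indecomposable quasi-decompositions (both recalled in the preliminaries) do the essential work; the type computation $\mathbf{IT}(A^n)=\mathbf{IT}(A)$ is routine by comparison. Once the quasi-summand is secured in $A$, the remainder is a direct appeal to the already-proved equivalences of Theorem~\ref{tffr-main}, so no further delicate analysis is required.
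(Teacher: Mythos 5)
Your core argument for (c)$\Rightarrow$(a) is correct and is essentially the paper's own: both proofs rest on the computation $\mathbf{IT}(A^n)=\mathbf{IT}(A)$, on the observation that a quasi-decomposition of $A$ into strongly indecomposable modules induces one of $A^n$, and on the Krull--Schmidt uniqueness of such quasi-decompositions in $\Q\TF$ to force the rank $1$ quasi-direct summand $B$ of $A^n$ supplied by Theorem \ref{tffr-main} to be quasi-isomorphic to a quasi-direct summand of a single copy of $A$; invariance of $\type$ under quasi-isomorphism then puts $A$ in case (c) of Theorem \ref{tffr-main}, and the theorem finishes the job. The step you flag as the main obstacle is handled in the paper in exactly the way you propose, so no divergence there.

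There is, however, a genuine slip in your logical skeleton. In the cycle (a)$\Rightarrow$(b)$\Rightarrow$(c)$\Rightarrow$(a), the implication (a)$\Rightarrow$(b) is \emph{not} ``immediate by taking $n=1$'': taking $n=1$ proves (b)$\Rightarrow$(a), and likewise (a)$\Rightarrow$(c), but (a)$\Rightarrow$(b) asserts that \emph{every} power of a self-pure-generator is again a self-pure-generator, which is a substantive claim. With only (b)$\Rightarrow$(c) and (c)$\Rightarrow$(a) established, nothing in your proof shows that (a) or (c) implies (b), so the three conditions are not yet proved equivalent. Fortunately the repair is one line and uses only what you already established: if $A$ is a self-pure-generator, Theorem \ref{tffr-main}(d) yields a rank $1$ direct summand $B$ of $A$ with $\type(B)=\mathbf{IT}(A)$; then $B$ is also a rank $1$ direct summand of $A^n$, and since $\mathbf{IT}(A^n)=\mathbf{IT}(A)$ by your computation, Theorem \ref{tffr-main}(d)$\Rightarrow$(a) applied to $A^n$ gives (b). This is precisely how the paper disposes of (a)$\Rightarrow$(b), so with that sentence added your proof matches the paper's in full.
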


\begin{proof}
(a)$\Rightarrow$(b) Since $\mathbf{IT}(A)=\mathbf{IT}(A^n)$ for all $n$, we can apply Theorem \ref{tffr-main} to obtain the conclusion.

The implication (b)$\Rightarrow$(c) is obvious.

(c)$\Rightarrow$(a) Using Theorem \ref{tffr-main} we observe that $A^n$ has a rank $1$ quasi-direct summand $B$ such that $\mathbf{type}(B)\leq  \mathbf{IT}(A^n)=\mathbf{IT}(A)$. If we consider a quasi-decomposition of $A$ into a direct sum of strongly indecomposable $R$-modules, it induces a quasi-decomposition of $A^n$ into a direct sum of strongly indecomposable $R$-modules. Since these quasi-decompositions are unique up to quasi-isomorphisms, it follows that $B$ is quasi-isomorphic to a quasi-direct summand of $A$, and the proof is complete. 
\end{proof}

\section{Direct products of abelian groups}

In \cite{cal-et-al} it is proved that a direct product $G=\prod_{i\in I}G_i$ of a family of rank $1$ torsion-free groups is a self-pure-generator if and only if for every $x\in G$ there exists $i\in I$ such that $\type(G_i)\leq \type(x)$. In the case when all $G_i$ are of the same type $\tau$, $G$ is a self-pure-generator if and only if $\tau$ is idempotent. In the following we apply Theorem \ref{tffr-main} to characterize self-pure-generators which are direct products of finite rank torsion-free groups. In order to do this we have to recall some facts about types of rank $1$ torsion-free groups, and to fix some notations. 

Let $\bbP$ be the set of all (positive) prime integers. A family $\chi=(\chi_p)_{p\in\bbP}$ such that $\chi_p\in\N\cup\{\infty\}$ for all $p\in \bbP$ is called a \textsl{characteristic}. For a fixed characteristic $\chi$ we define the following sets of primes: 
\begin{align*}
S_\infty(\chi)&= \{p\in \bbP\mid \chi_p=\infty\},\\
S_f(\chi)&= \{p\in \bbP\mid 0<\chi_p<\infty\}, \textrm{ and }\\
S_0(\chi)&= \{p\in \bbP\mid \chi_p=0\}.
  \end{align*}

Two characteristics $\chi$ and $\vartheta$ are equivalent if $S_\infty(\chi)=S_\infty(\vartheta)$, and  $\chi_p=\vartheta_p$ for almost all $p\in \bbP\setminus S_\infty(\chi)$. The equivalence classes induced by this equivalence relation are called \textrm{types}. If $G$ is a torsion-free group and $g\in G$ then for every $p\in \bbP$ we define the $p$-height of $g$ as 
$$h_p(g)=\begin{cases}
          \infty, \textrm{ if } g\in p^kG \textrm{ for all } k\in\N;\\
          \textrm{the bigest non-negative integer } k_p \textrm{ such that } g\in p^{k_p}G, \textrm{ otherwise}.
         \end{cases}
$$
The family $\chi(g)=(h_p(a))_{p\in\bbP}$ is called the \textsl{characteristic of $g$}. The type represented by this characteristic is called  \textsl{the type of $g$}, and it is denoted by $\type(g)$. If $h\in \langle g\rangle_\star$ is a non-zero element then $\type(g)=\type(h)$. Therefore, all non-zero elements of a rank $1$ torsion-free group $G$ have the same type, and this is called \textsl{the type of $G$}. 

Two rank $1$ torsion-free groups are (quasi-)isomorphic if and only if they have the same type, hence this definition for the notion of type fits with the the notion of type defined for modules over Dedekind domains. We recall that if $\tau$ and $\upsilon$ are types represented by the characteristics $\chi=(\chi_p)_{p\in\bbP}$ and $\vartheta=(\vartheta_p)_{p\in\bbP}$ then $\tau\leq \upsilon$ if and only if $\chi_p\leq \vartheta_p$ for almost all $p$, and if $\chi_p\nleq \theta_p$ then $\chi_p$ is finite.     

In order to study direct products of finite rank torsion-free groups, it is enough to consider direct products with all factors indecomposable. In this context, we recall from \cite{Mad-sch} that every finite rank torsion free group $A$ has a direct decomposition $A=B\oplus C$, where $B=0$ or $B$ is a direct sum of torsion-free groups of rank $1$, and $C$ is a torsion-free group which has no direct summands of rank $1$. Moreover, $C$ is unique up to a near-isomorphism. 

Recall that a subseteq $J\subseteq I$ is \textsl{cofinite} if $I\setminus J$ is finite.

\begin{theorem}\label{direct-products}
Let $I$ be a set whose cardinality is smaller than the first measurable cardinal. If $G_{i}$, $i\in I$, is a family of indecomposable finite rank torsion-free groups, the following are equivalent:
\begin{enumerate}[{\rm (1)}]
 \item $G=\prod_{i\in I}G_i$ is a self-pure-generator;
 \item \begin{enumerate}[{\rm (a)}]
        \item there exists $k_0\in I$ such that $G_{k_0}$ is of rank $1$ and $$\type(G_{k_0})\leq \mathbf{IT}(G_i)$$ for all $i\in I$, and
        \item if $\chi$ is a (fixed) representative characteristic for $\type(G_{k_0})$, there exist a cofinite subset 
        $S\subseteq S_f(\chi)$ and a cofinite subset $J\subseteq I$ such that for all $p\in S$ and all $j\in J$ the the groups $G_j$ are $p$-divisible.
       \end{enumerate}
 \item there exist $k_0\in I$ and a cofinite subset $J\subseteq I$ such that 
 \begin{enumerate}[{\rm (a)}]
 \item $G_{k_0}$ is of rank $1$,
 \item if $\type(G_{k_0})$ is represented by the characteristic $\chi$ then $\prod_{j\in J}G_j$ is $p$-divisible for almost all $p\in \bbP\setminus S_0(\chi)$, and 
 \item $\type(G_{k_0})\leq \mathbf{IT}(G_i)$ for all $i\in I\setminus J$.  
 \end{enumerate}
\end{enumerate}
\end{theorem}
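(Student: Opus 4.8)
The plan is to reduce the statement about arbitrary pure submodules to one about pure submodules of rank $1$, and then to exploit the fact that homomorphisms from the product $G$ into a \emph{reduced} rank $1$ group are controlled by finite subproducts, so that the finite rank machinery of Theorem \ref{tffr-main} and Lemma \ref{lemma1} applies coordinatewise. First I would record the general observation that a module $M$ is a self-pure-generator as soon as every pure rank $1$ submodule is $M$-generated: if $K\leq M$ is pure and $x\in K$, then the pure closure $\langle x\rangle_\star$ computed in $M$ lies in $K$, so a generating map $M\to\langle x\rangle_\star$ followed by the inclusion $\langle x\rangle_\star\hookrightarrow K$ shows that $x$ lies in the trace of $M$ in $K$; hence $K$ equals that trace and is $M$-generated. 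Thus (1) is equivalent to the assertion that $\langle x\rangle_\star$ is $G$-generated for every $x=(x_i)\in G$. Here I would note that $\type(\langle x\rangle_\star)$ is represented by the characteristic $(\inf_i h_p(x_i))_{p\in\bbP}$, since $x\in p^kG$ if and only if $x_i\in p^kG_i$ for all $i$.

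Next comes the key reduction to finite rank. A reduced rank $1$ torsion-free group is slender, so, using the hypothesis that $|I|$ is smaller than the first measurable cardinal, the classical theorem on homomorphisms from products into slender groups (cf. \cite{Fuchs}) gives that every homomorphism $G\to\langle x\rangle_\star$ vanishes on $\prod_{i\notin F}G_i$ for some finite $F\subseteq I$, hence factors through the finite subproduct $G_F=\prod_{i\in F}G_i$, which is a direct factor of $G$. Consequently $\langle x\rangle_\star$ is $G$-generated if and only if $\langle x\rangle_\star=\bigcup_{F}\mathrm{Trace}(G_F,\langle x\rangle_\star)$, a directed union over finite $F$ of traces of \emph{finite rank} torsion-free groups. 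The non-reduced case, in which $\langle x\rangle_\star\cong\Q$ and $S_\infty(\chi)=\bbP$, I would dispatch separately: there the conditions force the relevant $G_i$ to be divisible and the claim is immediate.

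For sufficiency, assume (2). By Lemma \ref{lemma1} the rank $1$ group $G_{k_0}$ generates a finite rank group $A$ exactly when $\type(G_{k_0})\leq\mathbf{IT}(A)$; since $\type(G_{k_0})\leq\mathbf{IT}(G_i)\leq\type(x_i)$ for every $i$, the only obstruction to $G_{k_0}$ generating $\langle x\rangle_\star$ outright is that the finitely-many-primes exceptions in these inequalities might accumulate over the infinitely many indices $i$, so that $\type(G_{k_0})\not\leq\type(x)$. This is precisely what (2)(b) controls: for $p$ in the cofinite set $S\subseteq S_f(\chi)$ and $j$ in the cofinite set $J$ one has $h_p(x_j)=\infty$, so $\inf_i h_p(x_i)$ can fall below $\chi_p$ only at the finitely many primes outside $S$ or because of the finitely many indices outside $J$. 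Collecting those exceptional indices into a finite set $F\supseteq\{k_0\}$, I would apply Theorem \ref{tffr-main} and Lemma \ref{lemma1} to the finite rank group $G_F$ to conclude that $\langle x\rangle_\star$ is $G_F$-generated, hence $G$-generated. For necessity, (2)(a) is obtained by feeding into $G$ an element whose coordinates realize the inner types $\mathbf{IT}(G_i)$, passing to a finite subproduct as above, and invoking Theorem \ref{tffr-main} together with Corollary \ref{cor-baer} to extract a rank $1$ factor $G_{k_0}$ with $\type(G_{k_0})\leq\mathbf{IT}(G_i)$ for all $i$. For (2)(b), which is the crux, I would argue by contradiction: if no such cofinite $S$ and $J$ exist, then infinitely many primes $p\in S_f(\chi)$ each fail $p$-divisibility in infinitely many factors, and I would assemble from these a single element $x=(x_i)$ whose type carries infinitely many independent height defects, so that no finite subproduct can generate $\langle x\rangle_\star$, contradicting the reduction of the previous paragraph.

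Finally, the equivalence (2)$\Leftrightarrow$(3) is a reformulation: one transports the $p$-divisibility of the individual $G_j$ (for $p\in S_f(\chi)$, $j\in J$) into divisibility of $\prod_{j\in J}G_j$ at almost all $p\in\bbP\setminus S_0(\chi)$, absorbs the behaviour on $S_\infty(\chi)$ into the inequality $\type(G_{k_0})\leq\mathbf{IT}(G_j)$, and moves the surviving type condition onto the finite complement $I\setminus J$; this amounts to combinatorial bookkeeping with the notions of cofinite set and ``for almost all $p$''. The main obstacle I anticipate is the interaction in Steps 3 and 4 between the directed union $\bigcup_F\mathrm{Trace}(G_F,\langle x\rangle_\star)$ and the almost-all-primes divisibility condition (2)(b): the finite rank input and the slender reduction are enabling tools, but the genuinely infinitary content — in particular the construction of the bad element witnessing the necessity of (2)(b) and the uniform control of height defects in the sufficiency argument — is where the real work lies.
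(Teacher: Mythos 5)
Much of your skeleton coincides with the paper's actual proof: the reduction of the self-pure-generator property to the generation of rank $1$ pure subgroups, the use of slenderness of reduced rank $1$ groups together with the non-measurability hypothesis (via \cite[Corollary 94.5]{Fuchs}) to confine homomorphisms out of the product to finitely many coordinates, the diagonal construction of pairwise distinct primes $p_k\in S_f(\chi)$ and indices $i_k$ with $G_{i_k}$ not $p_k$-divisible for the necessity of (2)(b), the height computation $h_p(x)=\inf_i h_p(x_i)$ plus Lemma \ref{lemma1} for the sufficiency direction, and the bookkeeping for (2)$\Leftrightarrow$(3). All of that is sound and parallels the paper.

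The genuine gap is in your necessity argument for (2)(a). First, $\mathbf{IT}(G_i)$ is only a greatest lower bound of the types of the nonzero elements and need not be realized by any element, so ``an element whose coordinates realize the inner types'' may not exist. More seriously, even with coordinates of small type, a nonzero homomorphism from a finite subproduct into $\langle x\rangle_\star$ can perfectly well originate in an indecomposable factor $G_j$ of rank $\geq 2$ (through a rank $1$ torsion-free quotient of $G_j$), so your argument produces no rank $1$ factor among the $G_i$; and Corollary \ref{cor-baer} only yields a rank $1$ direct summand of $G_F$, which cannot be matched against the given factors: since \emph{indecomposable} is strictly weaker than \emph{strongly indecomposable}, a rank $2$ indecomposable $G_j$ may have rank $1$ quasi-summands, so Krull--Schmidt in $\Q\TF$ does not let you conclude that some $G_{k_0}$ has rank $1$. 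The paper's device, which you are missing, is to choose for each rank-$\geq 2$ factor $G_i$ an element $g_i$ with $\Hom(G_i,\langle g_i\rangle_\star)=0$; such $g_i$ exists because otherwise the (b)$\Rightarrow$(c)$\Rightarrow$(d) machinery of Theorem \ref{tffr-main} would give $G_i$ a rank $1$ direct summand, contradicting indecomposability. Any nonzero map $G\to\langle (g_i)_{i\in I}\rangle_\star$ then restricts nontrivially to some coordinate, and composing with the projection (which is injective on the pure closure) contradicts this choice, forcing a rank $1$ factor and, with one more round of the same trick, the minimality $\type(G_{k_0})\leq\type(G_k)$ over the rank $1$ factors. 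Finally, you omit entirely the hardest sub-step: proving $\type(G_{k_0})\leq\mathbf{IT}(G_{j})$ for the factors $G_j$ of rank $\geq 2$. The paper gets this by a second contradiction: if some $h\in G_{j_0}$ had $\type(G_{k_0})\nleq\type(h)$, then every rank $1$ pure subgroup of $G_{j_0}$ would be $G_{j_0}$-generated, making $G_{j_0}$ a self-pure-generator and contradicting Theorem \ref{tffr-main} for an indecomposable group of rank $\geq 2$. Without these ingredients your step extracting $k_0$ and the full inequality in (2)(a) does not go through.
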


\begin{proof}
(1)$\Rightarrow$(2)
Let $G_i$, $i\in I$, be a family of indecomposable finite rank torsion-free groups such that $G=\prod_{i\in I}G_i$ is a self-pure-generator. For every $j\in I$, we denote by $\upsilon_j:G_j\to G$ the canonical embedding, and by $\pi_j:G\to G_j$ the canonical projection.  

Suppose that all $G_i$ are of rank at least $2$. Then for every $i\in I$ there exists $g_i\in G_i$ such that $\Hom(G_i,\langle g_i\rangle_\star)=0$. Since $G$ is a self-pure-generator, there exists a non-zero homomorphism $\alpha:G\to \langle (g_i)_{i\in I}\rangle_\star$. Since the pure subgroup $\langle (g_i)_{i\in I}\rangle_\star$ is a reduced torsion-free group of rank $1$, it is slender, hence there exists an index $j\in I$ such that   $\alpha \upsilon_j:G_j\to \langle (g_i)_{i\in I}\rangle_\star$ is a non-zero homomorphism.  Then $\pi_j\alpha\upsilon_j(G_j)$ is a non-zero subgroup of $\langle g_j\rangle_\star$, and this contradicts the initial assumption. Therefore, the set 
$$K=\{k\in I\mid G_k \textrm{ is of rank } 1\}$$ is nonempty.

Let $J=I\setminus K$. For every $i\in I$ we consider an element $(g_i)_{i\in I}$ such that $g_i\neq 0$ for all $i\in I$, and $\Hom(G_j,\langle g_j\rangle_\star)=0$ for all $j\in J$. Using a similar technique as above, it follows that there exist $k_0\in K$ and non-zero homomorphisms $\alpha_i:G_{k_0}\to \langle g_i\rangle_\star$ for all $i\in I$. This implies that there exists $k_0\in K$ such that $\type(G_{k_0})\leq \type(G_k)$ for all $k\in K$. Moreover, it follows that for every $j\in J$ and every element $g_j\in G_j$ such that $\Hom(G_j,\langle g_j\rangle_\star)=0$ we have $\type(G_{k_0})\leq \type(g_j)$. 

Suppose that there exists $j_0\in J$ and an element $h\in G_{j_0}$ such that $\type(G_{k_0})\nleq \type(h)$. We consider an element $g=(g_i)_{i\in I}$ such that $g_{k_0}\neq 0$, $g_{j_0}=h$, and for all $j\in J\setminus \{j_0\}$ we have $g_j\neq 0$ and $\Hom(G_j,\langle g_j\rangle_\star)=0$. Since $\Hom(G,\langle g\rangle_\star)\neq 0$, it follows that there exist $u\in I$ and a non-zero homomorphism $G_u\to \langle g\rangle_\star$. As above, it follows  that there exist a pure subgroup $L\leq G_u$ such that $G_u/H$ is of rank $1$ and there are non-zero homomorphisms $\alpha:G_u/L\to G_{k_0}$, $\beta:G_u/L\to \langle h\rangle_\star$, and $\alpha_j:G_u/L\to \langle g_j\rangle_\star$ for all $j\in J\setminus\{j_0\}$. By the choice of the elements $g_j$, $j\in J$, we observe that $u\notin J\setminus \{j_0\}$. If $u\in K$,  it follows that $\type(G_{k_0})\leq \type(G_u)\leq \type(h)$, a contradiction. Then $u=j_0$.  
Therefore, we proved that if $G_{j_0}$ has an element $h\in G_{j_0}$ such that $\type(G_{k_0})\nleq \type(h)$ then there exists $L\leq G_{j_0}$ such that $G_{j_{0}}/L$ is torsion-free of rank $1$, $\type(G_{j_{0}}/L)\leq \type(G_{k_0})$ and $\type(G/L)\leq \type(h)$. It follows that all rank $1$ pure subgroups of $G_{j_0}$ are generated by $G_{j_0}$, hence $G_{j_0}$ is a self-pure-generator. Since $G_{j_0}$ is indecomposable of rank at least $2$, we obtain a contradiction by using Theorem \ref{tffr-main}, hence the proof for (a) is complete.  

Suppose that (b) is not true.  Therefore, $S_f(\chi)$ and $I$ are infinite, and for every cofinite subset $S\subseteq S_f(\chi)$ and every cofinite subset $J\subseteq I$ there exist $p\in S$ and $j\in J$ such that $G_j$ is not $p$-divisible. 

Applying this for $S_1=S_f(\chi)$ and $J_1=I\setminus\{k_0\}$, it follows that we can find $p_1\in S_1$ and 
$j_1\in J_1$ such that $G_{j_1}$ is not $p_{j_1}$-divisible. Now, suppose that we constructed $k$ pairwise different primes $p_1,\dots,p_k\in S_{f}(\chi)$ and $k$ pairwise different $i_1,\dots,i_k\in I$ such that for all $\ell=1,\dots,k$ the group $G_{i_\ell}$ is not $p_{\ell}$-divisible. Using again our assumption for $S=S_f(\chi)\setminus\{p_1,\dots,p_k\}$ and $J=I\setminus\{i_1,\dots,i_k\}$, it follows that there exist $p_{k+1}\in S_f(\chi)\setminus\{p_1,\dots,p_k\} $ and $j_{k+1}\in I\setminus\{i_1,\dots,i_k\}$ such that $G_{j_{k+1}}$ is not $p_{k+1}$-divisible. Inductively, we conclude that there exist an infinite sequence $p_1,\dots,p_k,\dots$ of pairwise different primes from $S_f(\chi)$ and an infinite sequence $i_1,\dots,i_k,\dots$ of pairwise different elements of $I$ such that for every positive integer $k$ the group $G_{i_{k}}$ is not $p_k$-divisible. 

Since the groups $G_{i_{k}}$ are not $p_k$-divisible, for every positive integer $k$ there exists an element $g_{i_k}\in G_{i_{k}}$ such that $h_{p_{k}}(g_{i_{k}})=0$. We take an element $g=(g_i)_{i\in I}\in G$ such that $g_i=g_{i_k}$ for all $i\in\{i_1,\dots,i_k,\dots\}$. If $\vartheta$ is the characteristic of $g$, it follows that $S_f(\chi)\subseteq S_0(\vartheta)$. Since $S_f(\chi)$ is infinite, it follows that $\type(G_{k_0})\nleq \type(\langle g\rangle_\star)$. Moreover, $\type(G_{k_0})\leq \mathbf{IT}(G_i)$ for all $i\in I$, hence $\Hom(G_i,\langle g\rangle_\star)=0$ for all $i\in I$. By using \cite[Corollary 94.5]{Fuchs}, we obtain 
$\Hom(G, \langle g\rangle_\star)=0$, a contradiction, and the proof for (b) is complete.

(2)$\Rightarrow$(3) Since a direct product of abelian groups is $p$-divisible if and only if all its factors are $p$-divisible, it is easy to see that the group $G_{k_0}$ and the set $J$ exhibited in (2) have the required properties.

(3)$\Rightarrow$(1) We write $G=G_{k_0}\oplus H\oplus L$, where $H=\prod_{j\in J}G_j$, and $L=\prod_{i\in I\setminus (J\cup\{k_0\})}G_i$. From (b) it follows that $\type(G_{k_{0}})\leq \type(x)$ for all $x\in H$, and (c) implies that 
$\type(G_{k_{0}})\leq \type(y)$ for all $y\in L$. It follows, as in the proof of Lemma \ref{lemma1} that all pure subgroups of $G$ are $G_{k_0}$-generated, and the proof is complete. 
\end{proof}

In the following example we present a self-pure generator of infinite rank which has no rank $1$ direct summands.

\begin{example}\label{example}
There exists a groups $G$ which is a direct sum of a countable family of rank $2$ torsion-free abelian groups such that $G$ is a self-pure-generator, but it has no direct summands of rank $1$. 
\end{example}

\begin{proof}
We index the set $\bbP$ of all prime integers as $\bbP=\{p_n\mid n\in\Z\}$, and for every integer $k$ we consider the (idempotent) type $\tau_k$ represented by a characteristic $\chi=(\chi_{k,n})$, where 
$$\chi_{k,n}=\begin{cases}                                                                                                                                                          \infty \textrm{ if } n<k, \textrm{ and }\\ 
0 \textrm{ if } n\geq k.                                                                                                                                                         \end{cases}
$$
Therefore, $\tau_k<\tau_{k+1}$ for all $k\in \Z$. Since the types $\tau_k$ are idempotent, it follows that for  every $k\in \Z$ there exists a (unital) subring $R_k$ of $\bbQ$ such that $\mathrm{type}(R_k)=\tau_k$. 

We claim that for every integer $k$ there exists a strongly indecomposable torsion free abelian group $G_k$ of rank $2$ such that 
\begin{itemize}
 \item[(i)] 
for every $g\in G_k$ we have $\langle g\rangle_\star\cong R_k$, and 
\item [(ii)] for every $k\in\mathbb{Z}$ there exists an exact sequence of the form   $$0\to R_k\to G_k\to  R_{k+1}\to 0.$$ 
\end{itemize}
In order to prove this, we will use the construction used in \cite[Section 1]{Fuchs-Lostra}. Note that $R_k$ is a subgroup of $R_{k+1}$ and
$R_{k+1}/R_k\cong \Z(p_{k+1}^\infty)$. For all $k\in \Z$ we will identify $R_{k+1}/R_k$ with $\Z(p_{k+1}^\infty)$. Let $\varphi:R_{k+1}\to \Z(p_{k+1}^\infty)$ be the homomorphism obtained by multiplying the canonical surjection $R_{k+1}\to R_{k+1}/R_k$ by a fixed transcedental $p_{k+1}$-adic integer (hence $\varphi$ cannot be lifted at an endomorphism of $R_{k+1}$). Let us remark that $\Ker(\varphi)\cong R_k$. We construct a pullback diagram 
$$\xymatrix{ & & R_k\ar@{=}[r]\ar@{>->}[d]^{\beta} & R_k\ar@{>->}[d] \\
0\ar[r]& R_k\ar@{=}[d]\ar[r]^{\alpha} & G_k \ar[r] \ar@{->>}[d] & R_{k+1}\ar[r]\ar@{->>}[d]^\varphi & 0\\
0\ar[r]& R_k\ar[r]^{\iota} & R_{k+1} \ar[r]^{\pi}  & \Z(p^\infty_{k+1})\ar[r] & 0,}
$$
where $\iota$ is the canonical injection and $\pi$ is the canonical surjection. 

We will prove that $G_k$ is homogeneous of type $\tau_k$. We observe that $\alpha(R_{k})$ and $\beta(R_k)$ are pure rank $1$ subgroups of $G_k$. By the choice of $\varphi$, it follows that $\alpha(R_{k})\cap \beta(R_k)=0$. Hence $G_k$ is a group of rank $2$ which can be embedded in $R_{k+1}\times R_{k+1}$.  It follows that the type of every non-zero element of $G_k$ is $\tau_k$ or $\tau_{k+1}$. If there exists an element of type $\tau_{k+1}$ in $G_k$ then $\alpha$ splits by Baer's Lemma. But this is not possible since $\varphi$ cannot be lifted to an endomorphism of $R_{k+1}$. Therefore, $G_k$ is homogeneous of type $\tau_k$, and using \cite[Theorem 3.2]{Arnold} it follows that $G_k$ is strongly indecomposable. 

Let $G=\bigoplus_{k\in\Z}G_k$. For every non-zero element $g\in G$ there exists $k\in \Z$ such that $\type(g)=\tau_k$, and it follows that there exists an epimorphism $G_{k-1}\to \langle g\rangle_\star$. It follows that $G$ is a self-pure generator. 

Now, suppose that $G$ has a rank $1$ direct summand $A$. Then we can find an element $a\in G$ such that $A=\langle a\rangle_\star$. There exist $m\in\Z$ such that $\type(a)=\tau_m$. It follows that the $m$-th component of $a$ is non-zero
and there exists $n\in\Z$ with $m\leq n$ such that $a\in \bigoplus_{k=m}^n G_k$. From $A\leq \bigoplus_{k=m}^n G_k$, it follows that $A$ is a direct summand of $\bigoplus_{k=m}^n G_k$. Since $G_m$ is indecomposable, we have $m<n$. 
But for every $k>m$ the group $G_k$ is homogeneous of type $\tau_k>\tau_m$, and it follows that $\Hom(\bigoplus_{k=m+1}^n G_k, A)=0$. Using the fact that $A$ is a direct summand, it follows that there exists a (non-zero) projection $\pi:\bigoplus_{k=m}^n G_k\to A$, and we obtain that $\Hom(G_m,A)\neq 0$. Since $A$ is isomorphic to a rank $1$ (pure) subgroup of $G_m$, it follows that $G_m$ has an endomorphism which is not a quasi-isomorphism. But this is not possible by \cite[Theorem 3.3]{Arnold}. Then our assuption is false, hence $G$ has no direct summands of rank $1$.     
\end{proof}

 \end{document}